\newtheorem{lem}{Lemma}[section]
\newtheorem{defn}{Definition}[section]
\newtheorem{thm}[lem]{Theorem}
\newtheorem{claim}{Claim}[section]
\newtheorem{rem}[lem]{Remark}
\numberwithin{equation}{section}
\newcommand{\tx}{\tilde{x}}
\newcommand{\ty}{\tilde{y}}
\newcommand{\iu}{{i\mkern1mu}} 
\newcommand{\floor}[1]{\lfloor #1 \rfloor}
\newcommand{\LpCr}[1]{|| #1 ||_{L^p(C_r)}}
\title{\textbf{Singular solutions for complex second order elliptic equations and their application to time-harmonic diffuse optical tomography.}}
\author{\textsc{Jason Curran\thanks{Department of Mathematics and Statistics, University of Limerick, Ireland, Jason.Curran$@$ul.ie},\quad Romina
Gaburro}\thanks{Department of Mathematics and Statistics, Health Research Institute (HRI),
University of Limerick, Ireland,
Romina.Gaburro$@$ul.ie},\\ \textsc{Clifford J. Nolan\thanks{Department of Mathematics and Statistics,
Health Research Institute (HRI), University of Limerick, Ireland, Clifford.Nolan$@$ul.ie}}}
\date{}
\begin{document}
\maketitle

 \begin{abstract}
We construct singular solutions of a complex elliptic equation of second order, having an isolated singularity of any order. In particular, we extend results obtained for the real partial differential equation in divergence form by Alessandrini in 1990. Our solutions can be applied to the determination of the optical properties of an anisotropic medium in time-harmonic Diffuse Optical Tomography (DOT). 
\end{abstract}

Keywords:  Inverse problems, singular solutions, optical tomography \\
\textit{2010 MSC:} 35R30, 35J25, 35J57


\section{Introduction}\label{section introduction}
\setcounter{equation}{0}

In this paper we construct singular solutions to the complex elliptic equation
\begin{equation}\label{Sigma,q equation}
Lu = -\mbox{div}\left(K\nabla u\right)+qu=0,
\end{equation} 
with an isolated singularity of an arbitrary high order. The study of singular solutions of elliptic equations having isolated singularities goes back to the $1950$'s and $60$'s with the works of John \cite{J}, Bers \cite{B}, Gilbarg and Serrin \cite{G-Se} and Marcus \cite{Mar}. In 1990 Alessandrini \cite{A1} constructed singular solutions with an isolated singularity of an arbitrary high order for a real equation of type \eqref{Sigma,q equation}, with $q=0$ and $K\in W^{1,p}(\Omega)$ in a ball $B\subset\mathbb{R}^n$, $p>n$. Such solutions were applied to the so-called Calder\`on's inverse conductivity problem for the stable determination of the conductivity $K$ from the so-called Dirichlet-to-Neumann (D-N) boundary map $\Lambda_{K}$ in some domain $\Omega\subset\mathbb{R}^n$, with $n\geq 2$ (see the review papers \cite{Bo}, \cite{U} on this inverse problem). We also recall the paper of Isakov \cite{I}, who employed singular solutions to determine (the discontinuities of) the conductivity $K$ in  \eqref{Sigma,q equation} from $\Lambda_{K}$. Here we extend the construction of singular solutions in \cite{A1} to the complex equation  \eqref{Sigma,q equation}. A partial extension to the complex case, where $K$ was assumed to be a multiple of the identity matrix $I$ near the boundary $\partial\Omega$, was obtained in \cite{JRCE}. In the current work we completely remove this assumption for $K$ near the boundary.

The application we have in mind is the time-harmonic Diffuse Optical Tomography (DOT) in the presence of anisotropy up to the surface of the medium $\Omega$ under consideration. DOT is a minimally invasive imaging modality with significant potentials in medical applications. An object of unknown internal optical properties is illuminated with a near-infrared light  through its boundary and the scattered and transmitted light is measured ideally everywhere on the boundary. The optical properties of the material (the \textit{absorption} and \textit{scattering coefficients}) are then estimated from this boundary information (\cite{Ar}, \cite{Ha}, \cite{HS}, \cite{H}), typically encoded in a boundary map, e.g. the D-N map (see definition \ref{DN map}). In \cite{JRCE}, stability estimates of the derivatives (of any order) of the absorption coefficient at the boundary were established in terms of the D-N map corresponding to \eqref{Sigma,q equation}, under the assumption that the medium $\Omega$ under inspection was isotropic ($K=aI$, with $a$ scalar function) near its boundary. This limitation in the anisotropic structure of $K$ was needed for the construction of singular solutions to the complex \eqref{Sigma,q equation} with isolated singularities $y_0$, such that $K(y_0)=a(y_0)I$. Here we remove this assumption and extend the construction of singular solutions to the complex \eqref{Sigma,q equation} with no restriction on $K$ near its singularity $y_0$. This allows for the extension of the stability of the derivatives of the absorption coefficient in \cite{JRCE} to the full anisotropic DOT case in the low frequencies regime, which suits the DOT experiment we have in mind.

The paper is organised as follows. In section \ref{sec:Formulation}, we rigorously formulate the problem, state and prove our main result (Theorem \ref{theor singular sol}) on the construction of singular solutions to \eqref{Sigma,q equation} with complex coefficients. A crucial step for the machinery of the proof of Theorem \ref{theor singular sol} is Claim \ref{claim 1}. Section \ref{sec:DOTApplication} is devoted to the application of Theorem \ref{theor singular sol} to the DOT problem. We show that by feeding the singular solutions constructed in section \ref{sec:Formulation} into the so-called Alessandrini Identity \eqref{Alessandrini identity II}, H\"older type stability estimates of the derivatives (of any order) of the absorption coefficient of the medium in terms of the D-N map (see definition \ref{DN map}) can be established at the boundary, therefore extending results in \cite{JRCE} to the full anisotropic DOT problem.



\section{Formulation of the problem and main result}\label{sec:Formulation}

For $n\geq 3$, a point $x\in \mathbb{R}^n$ will be denoted by $x=(x',x_n)$, where $x'\in\mathbb{R}^{n-1}$ and $x_n\in\mathbb{R}$.
Moreover, given a point $x\in \mathbb{R}^n$, we will denote with $B_r(x), B_r'(x')$ the open balls in
$\mathbb{R}^{n},\mathbb{R}^{n-1}$, centred at $x$ and $x'$ respectively with radius $r$ and by $Q_r(x)$ the cylinder $B_r'(x')\times(x_n-r,x_n+r).$ We denote $B_r=B_r(0)$, $B'_r=B'_r(0)$ and $Q_r=Q_r(0)$. 

We consider the operator $L$ introduced in \eqref{Sigma,q equation} on $B_R(y)$, $y\in\mathbb{R}^n$. $K$ and $q$ are a complex-symmetric, matrix-valued and complex-valued function respectively. Setting $K = K_R + i K_I$, we assume that $K_RK_I = K_IK_R$, as this fits the application in section \ref{sec:DOTApplication}, and that there are positive constants $\lambda, \Lambda, E$ and $p > n$ such that 
\begin{align}
&|K(x)|\leq \lambda,\quad  |q(x)| < \Lambda,  \qquad\mbox{for any}\quad x \in B_R(y), \label{BoundednessAssumption} \\
&\lambda^{-1} |\xi|^2 \leq  K_R(x) \xi \cdot \xi , \qquad \mbox{ for a.e } x \in B_R(y), \quad\mbox{ for all } \xi \in \mathbb{R}^n,  \label{EllipticityAssumption} \\ 
&\lambda^{-1}  |\xi|^2 \leq  K_I(x) \xi \cdot \xi \text{ or }  K_I(x) \xi \cdot \xi \leq -\lambda^{-1} |\xi|^2,  \mbox{ for a.e } x \in B_R(y), \mbox{ for all } \xi \in \mathbb{R}^n, \label{EllipticityAssumption2}\\
&||K^{ij}||_{W^{1,\:p}(B_R(y))} \leq E,  \qquad\mbox{ for } i,j = 1, \ldots, n.\label{HolderAssumption}
\end{align}
Setting $q=q_R+iq_I$, \eqref{Sigma,q equation} is equivalent to the system for $u=(u^1, u^2)$
\begin{equation}\label{system compact}
-\mbox{div}(\kappa\nabla u) + qu = 0,\qquad\textnormal{in}\quad B_R(y),
\end{equation}
where $\kappa = \left(\begin{array}{rr} K_R & - K_I \\ K_I & K_R\end{array}\right)$ and $q = \left(\begin{array}{rr} q_R & - q_I \\ q_I & q_R\end{array}\right)$. Since $\kappa\xi\cdot \xi = K_R\xi_1\cdot \xi_1 + K_R\xi_2\cdot \xi_2$, for any $\xi = (\xi_1,\xi_2)\in\mathbb{R}^{2n}$, then \eqref{BoundednessAssumption}, \eqref{EllipticityAssumption} imply that system \eqref{system compact} satisfies the strong ellipticity condition 
\begin{equation}\label{strong ellipt}
\lambda^{-1}|\xi|^2\leq \kappa \xi \cdot \xi \leq \lambda |\xi|^2, \qquad\text{for\:a.e}\: x\in\Omega,\quad\text{for\:all}\quad \xi\in\mathbb{R}^{2n}.
\end{equation} 
Below we introduce some notation adopted through this paper:
 \begin{enumerate}
\item $C_{y,r} = \{x \in\mathbb{R}^n \; \vert \; r < |x-y| < 2r\}$, for $y\in\mathbb{R}^n$ and $r>0$.
\item $K_{(n)}^{-1}(y)$, $K_{nn}^{-1}(y)$ denote the last row, last entry in the last row of matrix  $K^{-1}(y)$, respectively, for $y\in\mathbb{R}^n$.
\item $\tilde{z}_{x,y} := \frac{K_{(n)}^{-1}(y)(x - y)}{\left(K_{nn}^{-1}(y)\right)^{1/2}\left( K^{-1}(y)(x-y)\cdot (x-y)\right)^{1/2}}\in\mathbb{C}$, for $x,y\in\mathbb{R}^n$.
\item For $v,w\in\mathbb{C}^n$, with $v=(v_1,\dots , v_n)$, $w=(w_1,\dots , w_n)$, we understand that $v\cdot w = \sum_{i=1}^{n} v_i w_i$.
\item $C$ will always denote a positive constant, which may vary throughout the paper.  
\item For $z\in\mathbb{C}$, we denote by  $\Re{z}$ and $\Im{z}$ the real and imaginary part of $z$, respectively.
\end{enumerate}
\begin{rem}
\eqref{EllipticityAssumption} implies that that for any $x,y\in\mathbb{R}^n$, with $x\neq y$, $\Re\left\{\left(K^{-1}(y)(x-y)\cdot (x-y)\right)\right\}>0$ and that without loss of generality we can assume that $\Re (K_{nn}^{-1}(y))>0$.
\end{rem}
Next, we state and prove our main result.

\begin{thm}\textnormal{(Singular solutions)}.\label{theor singular sol}
Given $L$ on $B_R(y_0)$ as in \eqref{Sigma,q equation} and satisfying \eqref{BoundednessAssumption} - \eqref{HolderAssumption}, for any $m=0,1,2,\dots$, there exists $u\in{W}_{loc}^{2,\:p}(B_{R}(y_0)\setminus\{y_0\})$ such that $Lu=0$ in ${B_R(y_0)\setminus\{y_0\}}$, with
\begin{equation}\label{sing solution G}
u(x)  = \left(K^{-1}(y_0)(x-y_0)\cdot (x-y_0)\right)^{\frac{2-n-m}{2}} m! \left[\left(K^{-1}(y_0)\right)_{nn}\right]^{\frac{m}{2}} C_m^{\frac{n-2}{2}}(\tilde{z}_{x,y_0}) + w(x), 
\end{equation}
where $C_m^{\frac{n-2}{2}}:\mathbb{C}\rightarrow\mathbb{C}$ is the Gegenbauer polynomial of degree $m$ and order $\frac{n-2}{2}$. Moreover, $w$ satisfies
\begin{eqnarray}
& &\vert{\:w}(x)\vert+\vert{\:x-y_0}\:\vert\:\vert{D}w(x)\vert\leq{C}\:\vert{\:x-y_0}\:\vert^
{\:2-n-m+\alpha},\quad{in}\quad{B}_{R}(y_0)\setminus\{y_0\}, \label{w stima lipschitz} \\
& &||D^{2}w||_{L^p(C_{y_0,r})} \leq{C}\:r^{\frac{n}{p}-n-m+\alpha},\quad\mbox{for}\:every\quad{r}, \:0<r<R/2. \label{w stima int}
\end{eqnarray}
Here $\alpha$ is such that $0<\alpha<\beta$, and $C$ depends only on $\alpha,\:n,\:p,\:R$, $\lambda$, $\Lambda$, and $E$. 
\end{thm}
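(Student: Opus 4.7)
I set $u = \Gamma_m + w$, where
\[
\Gamma_m(x) := \left(K^{-1}(y_0)(x-y_0)\cdot(x-y_0)\right)^{\frac{2-n-m}{2}}\, m!\, \left[K^{-1}(y_0)_{nn}\right]^{\frac{m}{2}} C_m^{\frac{n-2}{2}}(\tilde z_{x,y_0})
\]
is the explicit parametrix appearing in \eqref{sing solution G}, and show that a remainder $w$ satisfying \eqref{w stima lipschitz}--\eqref{w stima int} exists. The first step is Claim \ref{claim 1}, namely that the frozen-coefficient operator $L_0 := -\mbox{div}(K(y_0)\nabla\,\cdot\,)$ annihilates $\Gamma_m$ on $\mathbb{R}^n\setminus\{y_0\}$. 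Heuristically, after the complex linear change of variables $\tilde x = K(y_0)^{-1/2}(x-y_0)$, with a branch of the matrix square root determined by the remark preceding the theorem, $L_0$ becomes the Laplacian in $\tilde x$ and $\Gamma_m$ takes the classical form $|\tilde x|^{2-n-m}$ times a solid spherical harmonic of degree $m$, identifiable through the Gegenbauer generating function with the right-hand side of \eqref{sing solution G}. The positivity required to make the complex fractional powers unambiguous is supplied by \eqref{EllipticityAssumption} together with the remark preceding the theorem.

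Granted Claim \ref{claim 1}, the equation $Lu=0$ is equivalent to the inhomogeneous problem $Lw=-L\Gamma_m=:f$, where
\[
f = \mbox{div}\bigl((K(x)-K(y_0))\nabla\Gamma_m\bigr) - q\,\Gamma_m.
\]
From Sobolev embedding $W^{1,p}(B_R)\hookrightarrow C^{0,\beta}$ with $\beta=1-n/p$ one has $|K(x)-K(y_0)|\leq C|x-y_0|^{\beta}$, while direct differentiation of $\Gamma_m$ yields $|\nabla\Gamma_m|\leq C|x-y_0|^{1-n-m}$ and $|D^2\Gamma_m|\leq C|x-y_0|^{-n-m}$. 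Combined with \eqref{HolderAssumption} and \eqref{BoundednessAssumption}, this gives an annular bound of the shape $\|f\|_{L^p(C_{y_0,r})}\leq C\, r^{n/p-n-m+\beta}$, precisely one H\"older power better than the target $L^p$-norm of $D^2 w$ in \eqref{w stima int}.

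The last step is to solve $Lw=f$ by a fixed-point argument in a weighted norm tailored to \eqref{w stima lipschitz}--\eqref{w stima int}. Write $L=L_0+P$, where $P$ collects the $(K-K(y_0))$-perturbation together with $q$. One inverts $L_0$ annulus-by-annulus on the dyadic shells $C_{y_0,r}$ via the interior $W^{2,p}$ theory for the real strongly elliptic $2n\times 2n$ system \eqref{system compact} (available thanks to \eqref{strong ellipt}) combined with rescaling by $r$. On each annulus the perturbation $P$ picks up an extra factor $r^{\beta}$ from the H\"older continuity of $K$, so any choice $\alpha<\beta$ makes $L_0^{-1}P$ a contraction on the weighted norm
\[
\|w\|_{*} := \sup_{0<r<R/2}\Bigl(r^{n+m-\alpha-n/p}\|D^2 w\|_{L^p(C_{y_0,r})} + \sup_{x\in C_{y_0,r}}\bigl(|x-y_0|^{n+m-\alpha-2}|w(x)| + |x-y_0|^{n+m-\alpha-1}|Dw(x)|\bigr)\Bigr),
\]
and the Neumann series produces the required $w$; Morrey's inequality on each annulus then converts the $W^{2,p}$-control into the pointwise bounds in \eqref{w stima lipschitz}.

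The main obstacle is this last step. Unlike the real scalar divergence-form case of \cite{A1}, neither a maximum principle nor a variational structure is available, so the entire existence-and-estimates scheme has to be built from $L^p$-theory for the strongly elliptic system combined with careful annular rescalings; in particular, one needs the interior $W^{2,p}$ constants for \eqref{system compact} to depend only on $\lambda$, $\Lambda$, $E$, $p$ and $n$ (not on the distance to $y_0$), so that the dyadic iteration can be closed uniformly down to the singularity. A secondary delicate point is already present in Claim \ref{claim 1}: one must interpret the complex powers and the complex argument $\tilde z_{x,y_0}$ of the Gegenbauer polynomial as single-valued functions on $\mathbb{R}^n\setminus\{y_0\}$, which is exactly what the positivity remark preceding the theorem underwrites.
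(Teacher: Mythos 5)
Your overall architecture---peel off the explicit singular term $u_m=\Gamma_m$, reduce to $Lw=-Lu_m$ with $\|Lu_m\|_{L^p(C_{y_0,r})}\leq Cr^{n/p-(n+m-\beta)}$, and solve perturbatively around the frozen operator $L_0$---is the paper's. But the two steps you treat as routine are where the actual work lies, and as written they do not close. The statement ``one inverts $L_0$ annulus-by-annulus via the interior $W^{2,p}$ theory combined with rescaling'' does not yield what you need: interior estimates on dyadic shells control a solution you already have, but they do not produce a global solution of $L_0u=f$ on $B_R\setminus\{0\}$ with the decay $|u(x)|\leq C|x|^{2-s}$ for $s>n$. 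Such a solution must be built by hand: one convolves $f$ with the fundamental solution $\Gamma_K(x-y)$ of $L_0$ and, in the region $|y|\ll|x|$, subtracts the first $\lfloor s\rfloor-n$ terms of its Gegenbauer expansion $\sum_jP_j(x,y)$ to force the extra decay (the $I_1+I_2+I_3$ splitting in the paper's Claim \ref{claim 1}). In the complex setting this requires (i) showing $L_0P_j=0$, which the paper obtains by analytic continuation from the real Laplacian case on the set $\Re(\tx\cdot\tx)>0$, and (ii) bounding Gegenbauer polynomials of a \emph{complex} argument, which needs the Bernstein--Walsh lemma together with \eqref{EllipticityAssumption}--\eqref{EllipticityAssumption2} to keep $\tilde z_{x,y}$ in a fixed Bernstein ellipse. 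This is precisely the paper's main new technical content, and your proposal assumes it. (Note also that Claim \ref{claim 1} is this solvability statement, not the identity $L_0\Gamma_m=0$, which is instead quoted from \cite{JRCE} via repeated differentiation of the fundamental solution in $y_n$.)

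The contraction claim is the second gap. Choosing $\alpha<\beta$ only ensures that $L_0^{-1}P$ maps your weighted space into itself; its operator norm is of size $CR^{\beta}$ (the gain $r^{\beta}$ is merely bounded by $R^{\beta}$), with $C$ the constant from the $L_0$-solvability step, and nothing makes this smaller than $1$ without shrinking $R$---after which you would still need to extend the solution back to all of $B_R(y_0)$. Moreover, a single fixed weight cannot accommodate the structural change at the exponent $s=n$: the subtracted-kernel construction works only for $s>n$ non-integral, while once the right-hand side decays with $s<n$ one must switch to a different solvability result for the full variable-coefficient operator $L$. The paper sidesteps both issues by running a \emph{finite} iteration $L_0w_j=(L_0-L)w_{j-1}$ for $j\leq J=\lfloor m/\alpha\rfloor$ with $\alpha$ irrational (so every intermediate exponent stays non-integral), each step improving the decay by $\alpha$ with no smallness required since the sum is finite, and then invoking \cite[Lemma~3.5]{JRCE} for the last step once the exponent has dropped below $n$. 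If you want to keep a fixed-point formulation you must either prove smallness of $CR^{\beta}$ by shrinking and re-extending, or abandon it in favour of this finite bootstrap.
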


\begin{proof}[Proof of Theorem \ref{theor singular sol}] Setting $u_m := \frac{\partial ^{m} u_0}{\partial y_n ^m} \Big\vert_{y=y_0}$, $m=0,1,2,\dots$, where $u_0(x):=\Gamma_{K(y_0)}(x-y_0)$\\$=\left(K^{-1}(y_0) (x-y_0)\cdot (x-y_0)\right)^{\frac{2-n}{2}}$ is the fundamental solution, with a pole at $y_0$, of
\begin{equation}\label{L0}
L_{0}:=\mbox{div}\left(K(y_0)\nabla\cdot\right),
\end{equation}
it was shown in \cite{JRCE}, via an inductive argument, that
\begin{equation}\label{um1}
u_m(x) = \left(K^{-1}(y_0)(x-y_0)\cdot (x-y_0)\right)^{\frac{2-n-m}{2}}m! \left[K_{nn}^{-1}(y_0)\right]^{\frac{m}{2}} C_{m}^{\frac{n-2}{2}}(\tilde{z}_{x,y_0}).
\end{equation}
Note that $L_{0} u_m = 0$, in $\mathbb{R}^n\setminus\{y_0\}$. To find $w$ solution to 
\begin{equation}\label{solution Lq}
Lw = - Lu_m, \qquad\textnormal{in}\quad B_R(y_0)\setminus\{y_0\},
\end{equation}
satisfying \eqref{w stima lipschitz}, \eqref{w stima int}, we estimate the $L^p$-norm of the right hand side of \eqref{solution Lq} as
\begin{equation}\label{Lp est 1}
\LpCr{L u_m}  \leq C r^{\frac{n}{p} -(n+m-\beta)},\qquad \beta= 1 - n/p,
\end{equation} 
(see \cite{JRCE} for details), where $C$ depends $n$, $p$, $R$, $\lambda$, $\Lambda$, and $E$.\\

We start by showing that there exists $w_0\in W^{2,p}_{loc}(B_R(y_0)\setminus\{y_0\})$ such that 
\begin{equation}\label{w0}
L_{0}w_0= -Lu_m\quad\text{and}\quad |w_0(x)|\leq C |x-y_0|^{2-(n+m-\beta)}, \quad\text{for\:any}\:x\in B_R(y_0)\setminus\{y_0\}.
\end{equation}
To prove \eqref{w0}, we need the following result, which is crucial in the machinery of our complex singular solutions' construction, as it extends results in \cite[Lemma 2.3]{A1}, \cite[Lemma 3.6]{JRCE} on the Laplace operator, to the case of a constant complex operator $L_0$.
\begin{claim}\label{claim 1}
If $f\in L^{p}_{loc}(B_R \setminus\{0\})$ is a complex-valued function satisfying 
\begin{equation}\label{assumption f}
\LpCr{f} \leq A r^{\frac{n}{p} - s }, \qquad\text{for\: any r},\quad 0<r< R/2, 
\end{equation}
where $s > n$ is a non-integral real number and $A$ a positive constant, then there exists $u\in W^{2,p}_{loc}(B_R\setminus\{0\})$ satisfying 
\begin{equation}\label{solution L0}
L_0 u=f\quad\mbox{and}\quad |u(x)|\leq C |x|^{2-s},\qquad\text{for\:any}\quad x\in B_R \setminus\{0\},
\end{equation}
where $C$ depends only on $A$, $s$, $n$, $p$, $R$, $\lambda$, $\Lambda$, and $E$.\\
\end{claim}
\begin{proof}[Proof of Claim \ref{claim 1}] Recall that for $z$ real and $|z| \leq 1$, $|C_j^{\frac{n-2}{2}} (z) | \leq Cj^{n-3}, C = C(n) > 0$ (see \cite{A1}). Therefore, by the Bernstein-Walsh lemma \cite[Section 4.6]{BW}, for any $R_0 > 1$, we have 
\begin{equation}\label{BernsteinWalshResult}
|C_j^{\frac{n-2}{2}} (z) | \leq CR_0^jj^{n-3},\quad\text{for\: any}\quad z \in \mathbb{C},\quad\text{with}\quad |z| \leq (R_0^2 - 1)\big/(2R_0). 
\end{equation}
By \eqref{BoundednessAssumption} -\eqref{EllipticityAssumption2}, there exists $R_0 > 1$, with $R_0$ depending on $\lambda$, $\Lambda$, such that for any $x,y\in\mathbb{R}^n\setminus\{0\}$
\begin{equation}\label{ellipticity R0}
\left|K^{-1}(y) x\cdot y\Big/\left[\left(K^{-1}(y) x\cdot x\right)^{\frac{1}{2}} \left(K^{-1}(y) y\cdot y\right)^{\frac{1}{2}} \right]\right| \leq (R_0^2 - 1)(2R_0),
\end{equation} 
as the term on the left hand side of inequality \eqref{ellipticity R0} is bounded, and 
\begin{equation}\label{ellipticity C}
\left|\left(K^{-1}(y) y\cdot y\right)^{1/2}\right|\Big/\left|\left(K^{-1}(y) x\cdot x\right)^{1/2}\right| \leq \mathcal{C}^{-1} |y|/|x|,
\end{equation}
where $\mathcal{C}>0$ is a constant constant depending on $\lambda, \Lambda$. Hence, for $|y| < \mathcal{C}/R_0 |x|$, the fundamental solution of $L_0$ defined in \eqref{L0} with $y_0=y$, has the expansion
\begin{equation}\label{Gamma expansion}
\Gamma_{K}(x-y)= \left[K^{-1}(y) (x-y)\cdot (x-y)\right]^{(2-n)/2} = \sum_{j=0}^{\infty} P_j(x,y),
\end{equation}
where $P_j(x,y):= \left[K^{-1}(y) y\cdot y\right]^{j/2}\left[K^{-1}(y) x\cdot x\right]^{(2-n-j)/2}\:C_{j}^{(n-2)/2}(\tilde{z}_{x,y})$, for $x\neq 0$. Setting $\nu:=\floor{s}-n>0$, we claim that $\Gamma^{\nu}_{K}(x-y) = \Gamma_{K}(x-y) -  \sum_{j=0}^{\nu} P_j(x,y)$ is a fundamental solution for $L_0$ as well. To show this, it is enough to prove that $L_0 P_j = 0$, for any $j$.  With the linear change of variables $J: \mathbb{C}^n\rightarrow \mathbb{C}^n$, defined by $x\longrightarrow \tilde{x}:=Jx$, where $J$ is the complex-symmetric and invertible matrix such that $JJ= K^{-1}(y)$, we have
\begin{equation}\label{div into laplace}
L_0 P_j = \Delta_{\tx} \left\{\frac{\left(\tilde y\cdot \tilde y\right)^{\frac{j}{2}}}{\left( \tilde x\cdot \tilde x\right)^{\frac{j+n-2}{2}}}\:C_{j}^{\frac{n-2}{2}}\left[\frac{\tilde x\cdot \tilde y}{\left(\tilde x\cdot \tilde x\right)^{\frac{1}{2}} \left(\tilde y\cdot \tilde y\right)^{\frac{1}{2}}}\right] \right\}.  \
\end{equation}
Defining the open and connected set $\mathcal{U} : = \left\{ (\tx, \ty) \in \mathbb{C}^n \times \mathbb{C}^n \; \vert \; \Re ( \tx \cdot \tx) > 0, \quad \Re (\ty \cdot\ty) > 0 \right\}$ and denoting by $\mathcal{U}_{\widetilde{y}}$ its left-hand projection to $\mathbb{C}^n$, for any $\widetilde{y}\in\mathbb{C}^n$, with $\Re (\ty \cdot\ty) > 0$, we have $\mathbb{R}^n\setminus\{0\}\subset \mathcal{U}_{\widetilde{y}}$ and  $J(\mathbb{R}^n\setminus\{0\}) \subset \mathcal{U}_{\widetilde{y}}$, where the latter inclusion is due to the ellipticity assumption \eqref{EllipticityAssumption}. For $x, y \in \mathbb{R}^n \backslash \{0\}$, the right-hand side of \eqref{div into laplace} is zero (see \cite[Lemma~2.3]{A1}). We also have that for any $\widetilde{y}\in\mathbb{C}^n$, with $\Re (\ty \cdot\ty) > 0$, the expression on the right-hand side of \eqref{div into laplace} is analytic in $\widetilde{x}$ on $\mathcal{U}_{\widetilde{y}}$, therefore by analytic continuation, the right hand side of \eqref{div into laplace} is zero also on $\mathcal{U}_{\widetilde{y}}$, hence $L_0 P_j = 0$, for any $j$.\\
We define $C_{R^+} := \{ x \in \mathbb{R}^n \; | \; \frac{\mathcal{C}}{2R_0} |x| < |y| < R \}$, $C_{R^-} := \{ x \in \mathbb{R}^n \; | \; |y| < \frac{\mathcal{C}}{2R_0} |x| \}$ and $C_{\ell} := \{x \in \mathbb{R}^n \; | \; \frac{\mathcal{C}}{R_0}2^{\ell-1} |x| < |y| < \frac{\mathcal{C}}{R_0}2^{\ell} |x| \}$. Assuming without loss of generality that $f\in L^{\infty}(B_R)$ satisfies \eqref{assumption f}, we form
\begin{align}
u(x)&=\int_{C_{R^+}} \Gamma_{K}(x-y) f(y) dy - \sum_{j=0}^{\nu} \int_{C_{R^+}} \!\!\!P_j(x,y) f(y) dy +  \int_{C_{R^-}} \sum_{j=\nu +1}^{\infty} P_j(x,y) f(y) dy \nonumber \\
&:= I_1 + I_2 + I_3. \label{I in 3}
\end{align}
Note that $|I_1| \leq C \int_{C_{R^+}} |x-y|^{2-n} \: |f(y)| dy \leq C|x|^{2-s}$. $I_2$ and $I_3$ can be estimated by extending $f$ outside $B_R$ by setting $f=0$ on $\mathbb{R}^n\setminus B_R$ and by \eqref{BernsteinWalshResult} as
\begin{align}
|I_2|  &\leq  C \sum_{j=0}^{\nu} R_0^j j^{n-3} \mathcal{C}^{-j} \sum_{\ell=0}^{\infty}\int_{C_{\ell}} |f(y)| dy \leq C|x|^{2-s} \sum_{j=0}^{\nu} \frac{ j^{n-3}}{2^j (s-j-n)} \nonumber \\
&\leq C |x|^{2-s}, \label{integral 3}\\ 
|I_3| &\leq  C \hspace{-0.15cm}\sum_{j=\nu + 1}^{\infty} R_0^j j^{n-3} \mathcal{C}^{-j} \sum_{\ell=0}^{\infty}\int_{C_{\ell - 1}}\hspace{-0.15cm} \frac{|y|^j}{|x|^{j+n-2}} |f(y)| dy \leq \hspace{-0.1cm} C |x|^{2-s} \hspace{-0.25cm}\sum_{j=\nu +1}^{\infty}  \frac{j^{n-3}}{2^{j}(j-s+n)} \nonumber \\
&\leq C |x|^{2-s},\label{integral 4}
\end{align}
where $C$ depends only on $n$, $\lambda$, $\Lambda$, and $s$, concluding the proof of the claim.
\end{proof}

By setting $J=\lfloor \frac{m}{\alpha}\rfloor$, with $\alpha$ an irrational number, $0<\alpha<\beta$, by an inductive argument, one can show that for $j=1,\dots , J-1$, by Claim \ref{claim 1}, there are solutions $w_j$ to $L_0 w_j = (L_0 - L)w_{w_{j-1}}$ away from $y_0$, where $||(L_0 - L) W_{J-1}||_{L^p(C_{y_0, r})}\leq Cr^{\frac{n}{p}-s}$, and $s=n+m-(j+2)\alpha>n$, such that $|w_j (x)|\leq C |x-y_0| ^{2-n-m + (j+1)\alpha}$ (see \cite{A1}, \cite{JRCE} for details). Observing that $||(L_0 - L) W_{J-1}||_{L^p(C_{y_0, r})}\leq Cr^{\frac{n}{p}-s}$, with $s=n+m-(J+1)\alpha<n$, we can invoke \cite[Lemma~3.5]{JRCE} to find $W_J$ solution to $L W_J = (L_{0} - L)w_{J-1}$, such that $|W_J(x)|\leq C |x-y_0|^{2-n-m+(J+1)\alpha}$, for any $x\in B_R(y_0)\setminus\{y_0\}$. Setting $w=\sum_{j=0}^J w_j + W_J$ and applying \cite[Lemma~3.4]{JRCE}, we conclude the proof.
\end{proof}


\section{Application to anisotropic time-harmonic Diffuse Optical Tomography}\label{sec:DOTApplication}
We consider the inverse problem in Diffuse Optical Tomography of determining the absorption coefficient in an anisotropic medium $\Omega$, represented by a domain in $\mathbb R^n$, for $n\geq 3$. We assume that the boundary of $\Omega$, $\partial\Omega$, is of Lipschitz class with constants $r_0,L>0$, i.e. that for any $P\in\partial\Omega$ there exists a rigid transformation of coordinates under which we have $P=0$ and $\Omega\cap Q_{r_0}=\{(x',x_n)\in Q_{r_0}\: |\,x_n>\varphi(x')\},$
where $\varphi$ is a Lipschitz function on $B'_{r_0}$ satisfying $\varphi(0)=0 \mbox{  and  }  \|\varphi\|_{C^{0,1}(B'_{r_0})}\leq Lr_0$ (see for example \cite{JRCE}).\\

Under the so-called \textit{diffusion approximation} (\cite{Ar}, \cite{HS}), $\Omega$ is interrogated with an input field that is modulated with a fixed harmonic frequency $\omega=\frac{k}{c}$ , where $c$ is the speed of light and $k$ is the wave number. In this setting, in the anisotropic case, for a fixed $k$, the photon density $u$ in $\Omega$ solves \eqref{Sigma,q equation} with $K$ , $q$ defined by
\begin{equation}\label{KqDef}
K(x) = n^{-1}\left( (\mu_a(x) - ik)I + (I - B(x))\mu_s(x) \right)^{-1}, \quad q(x) = \mu_a(x) - \iu k, \qquad\text{for\:any}\: x\in\Omega,
\end{equation}
respectively. Here $I$ denotes the $n\times n$ identity matrix, $B$ (encompassing the anisotropy of $\Omega$) is known and $I-B$ is positive definite (\cite{Ar}, \cite{HS}).  $\mu_a$ and $\mu_s$ (the optical properties of $\Omega$) are the \textit{absorption} and \textit{scattering coefficients} respectively. Assuming that there are positive constants $\lambda$, $\mathcal{E}$, $E$ and $p > n$ such that 
\begin{eqnarray}
& & \lambda^{-1}\mu_s(x),\; \mu_a(x) \leq \lambda  , \qquad \mbox{ for a.e } x \in\Omega, \label{EllipticityAssumption OT}\\ 
& & \mathcal{E}^{-1} | \xi |^2 \leq (I - B(x)) \xi \cdot \xi \leq \mathcal{E} |\xi |^2, \mbox{ for a.e } x \in \Omega, \mbox{ for any } \xi \in \mathbb{R}^n, \label{AssumptionEllipticIB}  \\
& & ||\mu_a||_{W^{1,\:p}(\Omega)},\quad||\mu_s||_{W^{1,\:p}(\Omega)},\quad ||B||_{W^{1,\:p}(\Omega)} \leq E, \label{AssumptionSobolev}
\end{eqnarray}
$K $, $q$ in \eqref{KqDef} satisfy \eqref{BoundednessAssumption}-\eqref{HolderAssumption} on $\Omega$, hence the resulting system in \eqref{system compact} satisfies the strong ellipticity condition \eqref{strong ellipt} on $\Omega$. Here we assume that $\mu_s$ is known and we address \textit{the inverse problem of determining $\mu_a$ from the Dirichlet-to-Neumann boundary map} defined as follows. We denote $K$ , $q$ with $K_{\mu_a}$, $q_{\mu_q}$ respectively, to emphasise their dependence on the unknown $\mu_a$. 
\begin{defn}\label{DN map}
The Dirichlet-to-Neumann (D-N) map corresponding to $\mu_a$,
\begin{equation*}
\Lambda_{\mu_a}:H^{\frac{1}{2}}(\partial\Omega)\longrightarrow{H}^{-\frac{1}{2}}(\partial\Omega)
\end{equation*}
is defined by 
\begin{equation}
\langle\Lambda_{\mu_a}\:f,\:g\rangle\:=\:\int_{\:\Omega}\Big( K_{\mu_a}(x) \nabla{u}(x)\cdot\nabla\overline{\varphi}(x)+(\mu_a(x)-ik)u(x)\overline{\varphi}(x)\Big)\:dx, 
\end{equation}
for any $f$, $g\in H^{\frac{1}{2}}(\partial\Omega)$, where $u\in{H}^{1}(\Omega)$ is the weak solution to $-\textnormal{div}(K_{\mu_a}(x)\nabla u(x))+ (\mu_a-ik)(x)u(x)=0$ in $\Omega$, 
$u\vert_{\partial\Omega}=f$ in the trace sense and $\varphi\in H^{1}(\Omega)$ is any function such that $\varphi\vert_{\partial\Omega}=g$ in the trace sense. 
\end{defn}
Here $\langle\cdot ,\cdot\rangle$ denotes the dual pairing between $H^{\frac{1}{2}}(\partial\Omega)$ and $H^{-\frac{1}{2}}(\partial\Omega)$ and we will denote $||\Lambda_{\mu_a}||_{\star}= \sup\left\{\left|\langle\Lambda_{\mu_a}\:f,\:g\rangle\right|,\quad f,g\in H^{\frac{1}{2}}(\partial\Omega), ||f||_{H^{\frac{1}{2}}(\partial\Omega)} = ||g||_{H^{\frac{1}{2}}(\partial\Omega)} = 1\right\}$.

\begin{thm}\label{main result}(\textnormal{H\"older stability of boundary derivatives for anisotropic media}).
Let $B$, $\mu_s$ and $\mu_{a_j}$, for $j=1,2$, satisfy \eqref{EllipticityAssumption OT}-\eqref{AssumptionSobolev} and let $K_{\mu_{a_j}}$, $q_{\mu_{a_j}}$ be as in \eqref{KqDef}, for $j=1,2$. If for some integer $h \geq 1$, there is a constant $E_h>0$, such that $||\mu_{a_1} - \mu_{a_2}||_{C^{h,\alpha}(\overline\Omega_r)} \leq E_h$, for all $x \in \overline{\Omega_{r}}$, where $\Omega_r=\left\{x\in\overline\Omega\:|\:\textnormal{dist}(x,\partial\Omega)<r\right\}$, then there is $k_0>0$, such that, for $0 \leq k \leq k_0$, we have
\begin{equation}\label{derivative absorption}
\parallel D^{h}(\mu_{a_{1}}-\mu_{a_{2}})\parallel_{L^{\infty}\:(\partial\Omega)} \leq{C}\parallel\Lambda_{\mu_{a_1}}-\Lambda_{{\mu_{a_2}}}\parallel^{\delta_{h}}_{*}, 
\end{equation}
where $\delta_{h} = \Pi_{i=0}^{h} \frac{\alpha}{\alpha + i}$ and $C$ depends on $n$, $p$, $L$, $r_0$, $diam(\Omega)$, $\lambda$, $E$, $\mathcal{E}$, $h$, $E_h$, $k$ and $k_0$.
\end{thm}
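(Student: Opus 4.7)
The plan is to adapt the boundary stability scheme of \cite{A1,JRCE} by inserting the anisotropic singular solutions of Theorem \ref{theor singular sol} into the Alessandrini identity. The novelty compared with \cite{JRCE} is that, since Theorem \ref{theor singular sol} imposes no isotropy assumption on $K$ at its singular point, the whole argument of \cite{JRCE} can be carried out at an \emph{arbitrary} boundary point $y_0\in\partial\Omega$, yielding the fully anisotropic estimate \eqref{derivative absorption}.

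First, I would fix $y_0\in\partial\Omega$, flatten $\partial\Omega$ near it, and for small $\tau>0$ pick an exterior point $y_\tau=y_0-\tau\nu\notin\overline{\Omega}$, where $\nu$ is an admissible exterior direction supplied by the Lipschitz cone condition on $\partial\Omega$. After extending $\mu_{a_j}$, $\mu_s$, $B$ to a slightly larger set containing $B_R(y_\tau)$ while preserving \eqref{EllipticityAssumption OT}--\eqref{AssumptionSobolev}, Theorem \ref{theor singular sol} produces, for each $j=1,2$ and each prescribed order $m\in\{0,1,\dots,h\}$, a solution $u_j^{(m)}$ of $L_{\mu_{a_j}}u_j^{(m)}=0$ in a neighbourhood of $\overline{\Omega}$ whose principal part is the explicit Gegenbauer expression \eqref{sing solution G} centred at $y_\tau$, with remainder $w_j^{(m)}$ controlled by \eqref{w stima lipschitz}--\eqref{w stima int}.

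Next I apply the Alessandrini identity with $u_1=u_1^{(m)}$ and $u_2=u_2^{(m)}$. The boundary term is bounded by $\|\Lambda_{\mu_{a_1}}-\Lambda_{\mu_{a_2}}\|_\star$ times the $H^{1/2}(\partial\Omega)$-norms of the two traces, each of which grows as an explicit negative power of $\tau$. The volume term takes the schematic form
\begin{equation*}
\int_\Omega (\mu_{a_1}-\mu_{a_2})(x)\,\Phi_m(x-y_\tau)\,dx \;+\; (\text{gradient contribution}) \;+\; (\text{remainder}),
\end{equation*}
where $\Phi_m$ is homogeneous of degree $2(2-n-m)$ about $y_\tau$. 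Rescaling $x=y_0+\tau z$ and Taylor expanding $\mu_{a_1}-\mu_{a_2}$ to order $h$ at $y_0$, the Gegenbauer factor selects a specific directional component of $D^h(\mu_{a_1}-\mu_{a_2})(y_0)$, whereas the lower Taylor terms and the contributions of $w_j^{(m)}$ carry an extra $\tau^{\alpha}$ decay by \eqref{w stima lipschitz}--\eqref{w stima int}.

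Balancing the two sides of the identity and optimising in $\tau$ leads, at order $h$, to an inequality of the form
\begin{equation*}
|D^h(\mu_{a_1}-\mu_{a_2})(y_0)| \;\leq\; C\Bigl(\|\Lambda_{\mu_{a_1}}-\Lambda_{\mu_{a_2}}\|_\star + \sum_{k<h}|D^k(\mu_{a_1}-\mu_{a_2})(y_0)|^{(h+\alpha)/(k+\alpha)}\Bigr)^{\!\alpha/(h+\alpha)},
\end{equation*}
so that iterating from $h=0$ upward, with the standard telescoping of \cite[Section~4]{A1}, yields the compound H\"older exponent $\delta_h=\prod_{i=0}^{h}\alpha/(\alpha+i)$. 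The smallness condition $0\le k\le k_0$ is used only to guarantee that $q=\mu_a-\iu k$ keeps \eqref{system compact} uniformly elliptic in the sense of \eqref{strong ellipt}, so that Theorem \ref{theor singular sol} applies with constants depending only on the stated data. The main obstacle I foresee is the technical bookkeeping needed to isolate the leading asymptotic: because the two singular solutions are anchored on \emph{different} kernels $\Gamma_{K_{\mu_{a_j}}(y_\tau)}$, the difference $K_{\mu_{a_1}}-K_{\mu_{a_2}}$ is paired against principal parts that do not coincide, and one has to check that the cross terms arising from this mismatch can be absorbed either into the remainder bounds \eqref{w stima lipschitz}--\eqref{w stima int} or into the induction hypothesis for lower-order derivatives. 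It is precisely this step that forced the isotropy assumption in \cite{JRCE} and that our Theorem \ref{theor singular sol} now releases.
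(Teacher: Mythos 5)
Your overall strategy coincides with the paper's: insert the singular solutions of Theorem \ref{theor singular sol}, centred at an exterior point $z_\tau$ approaching a boundary point, into Alessandrini's identity \eqref{Alessandrini identity II}, extract the leading asymptotics as $\tau\to 0$, and iterate over the derivative order to compound the exponents into $\delta_h=\prod_{i=0}^{h}\frac{\alpha}{\alpha+i}$. (The paper fixes the specific point $x^0$ where the $h$-th normal derivative attains its maximum rather than working at an arbitrary $y_0$ and taking a supremum afterwards, but that is cosmetic.)

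There is, however, one substantive point where your sketch deviates from what the argument actually requires. In the DOT setting the coefficient $K_{\mu_a}$ in \eqref{KqDef} itself depends on $\mu_a$, so $K_{\mu_{a_1}}-K_{\mu_{a_2}}$ is pointwise comparable to $\mu_{a_1}-\mu_{a_2}$, and the \emph{gradient} term $\int_\Omega (K_{\mu_{a_1}}-K_{\mu_{a_2}})\nabla u_1\cdot\nabla\overline{u}_2$ scales like $|x-z_\tau|^{2(1-n-m)}$ --- two powers more singular than the zeroth-order term you single out as the one that ``selects'' $D^h(\mu_{a_1}-\mu_{a_2})(y_0)$. It is therefore the gradient term that dominates, and to get a \emph{lower} bound on the volume integral (which is what a stability estimate needs) one must prevent $\nabla u_1\cdot\nabla\overline{u}_2$ from degenerating near the singularity; this is exactly the pointwise lower bound \eqref{estimate gradient}, which the paper establishes at $k=0$ and then propagates to $0\le k\le k_0$ by perturbation. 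This is also the true role of the smallness condition on $k$: it is not merely to preserve the ellipticity \eqref{strong ellipt} (which small $k$ does not threaten), but to preserve the non-degeneracy of the gradients of the singular solutions. Your sketch relegates the gradient term to an unexamined ``contribution'' and attributes $k_0$ to ellipticity, so as written the key quantitative step of the proof is missing; the rest of your outline (trace estimates, Taylor expansion, absorption of the remainders $w_j$ via \eqref{w stima lipschitz}--\eqref{w stima int}, and the telescoping iteration) matches the paper's route through \cite{JRCE} and \cite{A1}.
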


\begin{proof}[Sketch of the proof of Theorem \ref{main result}] The proof follows the same line of that in \cite[Proof of Theorem 2.6]{JRCE}, therefore we only highlight its main steps. The starting point is the well known Alessandrini's Identity \cite{JRCE}, 
\begin{align}
\langle (\Lambda_{\mu_{a_1}} - \Lambda_{\mu_{a_2}} )u_1, u_2\rangle &= \int_{\Omega} \left(K_{\mu_{a_1}} (x) - K_{\mu_{a_2}} (x)\right)\nabla u_1(x)\cdot\nabla \overline{u}_2(x)\:dx \nonumber \\ 
&+\int_{\Omega}\left(\mu_{a_1}(x)-\mu_{a_2}(x)\right)u_1(x)\overline{u}_2(x)\:dx, \label{Alessandrini identity II}
\end{align}
which holds true for any $u_1,u_2 \in H^1 (\Omega)$ solutions to \eqref{Sigma,q equation} in $\Omega$, with $K=K_{\mu_{a_i}}$, $q=q_{\mu_{a_i}}$, for $i=1,2$ respectively. Following the same reasoning of \cite{JRCE}, we feed \eqref{Alessandrini identity II} with the singular solutions $u_i\in W^{2,p}(\Omega)$ in \eqref{sing solution G}, for $i=1,2$ respectively, having a singularity at $z_{\tau}=x^{0}+\tau\widetilde\nu(x^{0})\notin\overline\Omega$, where $x^{0}\in\partial\Omega$ is such that $(-1)^{h}\frac{\partial^h}{\partial\tilde\nu^h}(\mu_{a_1}-\mu_{a_2})(x^{0})\:=\:\left\|\frac{\partial^h}{\partial\tilde\nu^h}\left(\mu_{a_1}-\mu_{a_2}\right)\right\|_{L^{\infty}(\partial\Omega)}$ and $\widetilde\nu$ is a $C^{\infty}$, non-tangential vector field on $\partial\Omega$ pointing outwards of $\Omega$ (see \cite{JRCE}) so that $C\:\tau\leq{d}(z_{\tau},\:\partial\Omega)\leq\tau$, for any $\tau$, $0\leq\tau\leq\tau_{0}$, with $\tau_{0}, C>0$ depending on $L$, $r_0$ only. Observing  also that for $k=0$ (see \cite[Lemma 3.1]{A1}, \cite[Lemma 3.7]{JRCE})
\begin{equation}\label{estimate gradient}
|Du_i(x)|> |x-z_{\tau}|^{1-(n+m)},\qquad\text{for\:every}\: x,\quad 0 < |x-z_{\tau}| < r_0,
\end{equation}
where $r_0$ depends only on $\lambda$, $E$, $p$, $m$ and $R$, we argue that \eqref{estimate gradient} still holds true if $0\leq k\leq k_0$, for some $k_0>0$. We conclude the proof, as in \cite{JRCE}, by allowing $z_{\tau}$ to approach $x^0\in\partial\Omega$, by letting $\tau\rightarrow 0$.
 \end{proof}

\section*{Acknowledgements}
All authors were partly supported by Science Foundation Ireland under Grant number 16/RC/3918. RG, CN also acknowledge the support of the Isaac Newton Institute for Mathematical Sciences, Cambridge, where part of the research of this article was carried out during a semester on inverse problems in the spring 2023.\\

\end{document}